\newif\ifpictures
\newif\ifcomment
\author{Khazhgali Kozhasov}
\address{Khazhgali Kozhasov, Technische Universit\"at Braunschweig, Institut f\"ur Analysis und Algebra, Universit\"atsplatz 2, 38106 Braunschweig,
 Germany\medskip}
\email{k.kozhasov@tu-braunschweig.de}
\subjclass[2010]{14P10,	15A18, 15B57, 90C22}
\keywords{}
\title[On eigenvalues of locally PSD matrices]{On eigenvalues of symmetric matrices with PSD principal submatrices
}
\begin{document}

\maketitle

\begin{changemargin}{1.5cm}{1.5cm}
  {\bf\noindent Abstract.}
  We investigate convexity properties of the set of eigenvalue tuples of $n\times n$ real symmetric matrices, whose all $k\times k$ (where $k\leq n$ is fixed) minors are positive semidefinite.
  It is proven that the set $\lambda(\cS^{n,k})$ of eigenvalue vectors of all such matrices is star-shaped with respect to the nonnegative orthant $\R^n_{\geq 0}$ and not convex already when $(n,k)=(4,2)$.
\end{changemargin}
\vspace{1cm}

\section{Introduction and statement of results}

Let $\Sym_n$ denote the space of $n\times n$ real symmetric matrices. A matrix $X\in \Sym_n$ is called \emph{positive semidefinite} (or, simply, PSD), if
$\Vector{v}^T X\Vector{v} \geq 0$
holds for all column vectors $\Vector{v}\in \R^n$.
Equivalently, $X\in \Sym_n$ is PSD if all its eigenvalues are nonnegative.
For $k\in \{1,\dots, n\}$ let $\cS^{n,k}$ denote the set of those matrices in $\Sym_n$, whose all $k\times k$ principal submatrices are positive semidefinite.
Elements of $\cS^{n,k}$ are known as \emph{$k$-locally positive semidefinite matrices}, they have been systematically studied in \cite{BDMS2020} and \cite{BDSS2020}.
Note that $\cS^{n,n}$ consists of all positive semidefinite matrices in $\Sym_n$.
The set $\cS^{n,k}$ is a closed convex cone containing $\cS^{n,n}$ and, moreover, $\cS^{n,k}\supsetneq \cS^{n,k+1}$ for any $k\in \{1,\dots, n-1\}$.
The dual cone to $\cS^{n,k}\subset \Sym_n$ consists of PSD matrices of factor width $k$, which were introduced and investigated in \cite{BOMAN2005239, PP2018, GKS2019}. Matrices of factor width $2$ are also known as scaled diagonally dominant matrices, see \cite{BOMAN2005239}.
One of the motivations to study properties of locally PSD matrices came from empirical observations that some optimization problems (see \cite{SL2014, KDS2016}), where constraints are given by PSD matrices, have close optimal values to those of their relaxations, where constraints are relaxed to be $k$-locally PSD. 

Given a matrix $X\in \Sym_n$, let $\lambda(X)=(\lambda_1(X),\dots,\lambda_n(X))$,  $\lambda_1(X)\geq\dots\geq \lambda_n(X)$, denote the vector of its ordered eigenvalues and for any permutation on $n$ elements $\sigma \in S_n$ let $\lambda(X)^\sigma=(\lambda_{\sigma_1}(X),\dots, \lambda_{\sigma_n}(X))$ be the vector of permuted eigenvalues.
In this work we are interested in the set
\begin{align*}
  \lambda(\cS^{n,k})\ =\ \left\{\lambda(X)^\sigma\,:\, X\in \cS^{n,k},\ \sigma\in S_n\right\}
\end{align*}
of all possible (permuted) eigenvalue vectors of $k$-locally PSD matrices.
As observed in \cite{BDSS2020}, the set $\lambda(\cS^{n,k})$ is contained in \emph{the closed hyperbolicity cone}
\begin{align*}
  H(e^n_k)\ =\ \{\lambda\in \R^n\,:\, e^n_k(\lambda+t(1,\dots,1))=0\,\Rightarrow t\leq 0\}
\end{align*}
of the $k$-th elementary symmetric polynomial $e^n_k(\lambda)=\sum_{1\leq i_1<\dots<i_k\leq n} \lambda_{i_1}\cdot\ldots\cdot \lambda_{i_k}$.
Moreover, the inclusion $\lambda(\cS^{n,k})\subseteq H(e^n_k)$ is strict for $2<k<n-1$, for $(n,k)=(4,2)$ \cite[Cor. 3]{BDSS2020} and, surprisingly, one has equality $\lambda(\cS^{n,n-1})=H(e^n_{n-1})$ for $k=n-1$ (see \cite[Thm. $2.2$]{BDSS2020}).
The authors of \cite{BDSS2020} pointed out that it is unknown whether the set $\lambda(\cS^{n,k})$ of eigenvalue vectors of matrices in $\cS^{n,k}$ is convex for any $k$ and $n$.
We give a negative answer to this question, showing that $\lambda(\cS^{4,2})\subsetneq H(e^4_2)$ is not convex.
\begin{theorem}\label{thm:nonconvex}
  The set
  \begin{align*}
    \lambda(\cS^{4,2})\ =\ \{(\lambda_{\sigma_1}(X),\lambda_{\sigma_2}(X),\lambda_{\sigma_3}(X),\lambda_{\sigma_4}(X))\,:\, X\in\cS^{4,2},\ \sigma\in S_4\} 
  \end{align*}
  of eigenvalue vectors of $2$-locally PSD matrices in $\Sym_4$ is not convex.
\end{theorem}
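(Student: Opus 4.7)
The strategy is to find two matrices $A, B \in \cS^{4,2}$ and a permutation $\sigma \in S_4$ such that the midpoint $\mu := \tfrac{1}{2}(\lambda(A) + \lambda(B)^\sigma)$ lies outside $\lambda(\cS^{4,2})$. Since $\cS^{4,2}$ is a convex cone, $\lambda(\cS^{4,2})$ is closed under positive scaling, and one is free to normalize (say, fix the trace). A natural reservoir of extremal examples is the class of ``sign matrices'' $X$ with $X_{ii}=1$ and $X_{ij}\in\{\pm 1\}$: each lies in $\cS^{4,2}$ because $X_{ij}^2 = X_{ii}X_{jj}$ makes every $2\times 2$ principal minor vanish, and each produces a spectrum on the boundary hypersurface $\{e^4_2=0\}$ of $H(e^4_2)$, since $\mathrm{tr}(X^2)=16=\mathrm{tr}(X)^2$. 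Two representative spectra obtained this way are $(4,0,0,0)$, from $X=J_4$ the all-ones matrix, and $(2,2,2,-2)$, from the symmetric $4\times 4$ Hadamard matrix.

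Concretely, I would tabulate the spectra obtained from such sign matrices together with rescalings $D^{1/2}XD^{1/2}$ by nonnegative diagonal matrices $D$ (which remain in $\cS^{4,2}$), form candidate midpoints $\mu = \tfrac{1}{2}(\lambda^{(1)}+\lambda^{(2)})$ for $\lambda^{(1)},\lambda^{(2)}$ in the resulting $S_4$-orbit, and compute the remaining elementary symmetric invariants $e^4_3(\mu)$ and $e^4_4(\mu)$. Since both $\lambda^{(i)}$ can be arranged to lie on the boundary $\{e^4_2=0\}$ of $H(e^4_2)$, the midpoint $\mu$ will generically satisfy $e^4_2(\mu)>0$ and lie strictly inside $H(e^4_2)$, leaving room for $\mu \notin \lambda(\cS^{4,2})$.

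The main obstacle is the final nonrealizability check. The approach is to parameterize an arbitrary $X \in \cS^{4,2}$ by its nonnegative diagonal $d=(X_{11},\dots,X_{44})$ and normalized off-diagonal couplings $c_{ij}\in[-1,1]$ via $X_{ij}=c_{ij}\sqrt{d_id_j}$, and to translate the spectral matching conditions $e^4_k(X)=e^4_k(\mu)$ for $k=1,\dots,4$ into a polynomial system in $(d,c)$. Infeasibility should manifest either via a finite case analysis on the pattern of vanishing diagonal entries (where each $d_i=0$ forces the $i$-th row and column of $X$ to vanish and collapses the problem to a lower-dimensional subcone, typically $\cS^{3,2}$ or $\cS^{2,2}$) or, more elegantly, as a single polynomial inequality on $(e^4_1(\lambda),\dots,e^4_4(\lambda))$ that holds throughout $\lambda(\cS^{4,2})$ but fails at the chosen $\mu$. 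Producing this inequality -- or at worst certifying infeasibility by symbolic computation -- will be the heart of the argument.
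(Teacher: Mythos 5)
Your high-level plan is the same as the paper's: produce a specific vector in $H(e^4_2)$ that is a convex combination of two points of $\lambda(\cS^{4,2})$, then certify nonrealizability by a polynomial infeasibility argument. However, there are two concrete gaps.

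First, the reservoir of examples you propose is unlikely to produce a working midpoint. Every sign matrix (or diagonal rescaling $D^{1/2}XD^{1/2}$ of one) has all $2\times2$ principal minors equal to zero, trace equal to $\sum d_i\ge 0$, and $e^4_2=0$; your sample spectra $(4,0,0,0)$ and $(2,2,2,-2)$ each have at most one negative entry. Any convex combination of such vectors (or of their permuted copies) again has at most one negative entry, because the negative part of each summand lives in a single coordinate and cancellation only helps. But it is known (Corollary~\ref{cor:(4,2)} in this paper, built on \cite[Thm.~8.1]{BDSS2020}) that every $\lambda\in H(e^4_2)$ with at most one negative entry \emph{is} realized by a $2$-locally PSD matrix — so such midpoints can never serve as a nonconvexity witness. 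The paper gets around this by starting from the vector $(4,4,-2,0)$, obtained by padding $(4,4,-2)\in H(e^3_2)=\lambda(\cS^{3,2})$ with a zero (Lemma~\ref{lemma1}), and averaging with the permuted copy $(4,4,0,-2)$: the midpoint $(4,4,-1,-1)$ has \emph{two} negative entries, which is essential. You would need to enlarge your candidate set beyond matrices with $e^4_2=0$ spectra to hit such points.

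Second, and more fundamentally, the nonrealizability certificate — which you correctly identify as ``the heart of the argument'' — is entirely deferred. The paper carries this out by parameterizing $X=V^T\Lambda V$ with $\Lambda=\operatorname{diag}(4,4,-1,-1)$ and $V\in O(4)$ (normalized by $v_{12}=v_{34}=0$ using the $O(2)\times O(2)$ gauge freedom in the two eigenspaces), so that the eigenvalue constraint is built in and only nonnegativity of the $1\times1$ and $2\times2$ principal minors must be verified. Eliminating the $v_{ij}$ by a Gr\"obner basis yields a small generating set $\{L,f_1,\dots,f_4,F\}$ of the elimination ideal, and the proof concludes with an explicit term-wise estimate showing $F<0$ on the relevant unit simplex, so no nonnegative common zero exists. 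Your alternative parameterization $X_{ij}=c_{ij}\sqrt{d_i d_j}$ with $|c_{ij}|\le 1$, $d_i\ge 0$ and constraints $e^4_k(X)=e^4_k(\mu)$ is a legitimate starting point, and the case split on vanishing $d_i$ (which kills a row and column) is a correct observation, but you neither identify a $\mu$ for which it must fail nor exhibit the inequality or symbolic certificate that proves infeasibility. Until that step is carried out — whether by your matrix-entry parameterization or the paper's spectral one — the argument is a program rather than a proof.
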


\begin{remarkx}\label{rem:(4,2)}
  Specifically, we prove that  $(4,4,-1,-1)$ is not in $\lambda(\cS^{4,2})$, though it lies on the segment joining the vector $(4,4,0,-2)\in \lambda(\cS^{4,2})$ with its permuted copy.
\end{remarkx}

Even though the set $\lambda(\cS^{n,k})$ is not convex in general, it is star-shaped with respect to the nonnegative orthant $\R^n_{\geq 0}=\lambda(\cS^{n,n})$.

\begin{proposition}\label{prop:star}
For all $\lambda\in \lambda(\cS^{n,k})$, $\lambda^+\in \R^n_{\geq 0}$ the sum $\lambda+\lambda^+$ is in $\lambda(\cS^{n,k})$.
\end{proposition}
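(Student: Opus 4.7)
The plan is to realize $\lambda+\lambda^+$ as a (permuted) eigenvalue vector of $X+Z$ for a suitable positive semidefinite matrix $Z$ that is simultaneously diagonalizable with a representing matrix $X$ of $\lambda$. The key structural facts I would exploit are that $\cS^{n,n}\subseteq \cS^{n,k}$ (so every PSD matrix lies in $\cS^{n,k}$) and that $\cS^{n,k}$ is a convex cone, so adding a PSD matrix to any element of $\cS^{n,k}$ keeps us inside $\cS^{n,k}$. The only nontrivial point is to make sure the eigenvalues of the sum are actually $\lambda+\lambda^+$ (not merely majorized by it, which is all one would get from a generic PSD perturbation); this is exactly what forces $Z$ to commute with $X$.

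Concretely, I would first write $\lambda=\lambda(X)^\sigma$ for some $X\in \cS^{n,k}$ and $\sigma\in S_n$, and diagonalize $X=\sum_{j=1}^n \lambda_j(X)\,v_jv_j^T$ in an orthonormal basis $v_1,\dots,v_n$ of $\R^n$. Then I would set
\[
  \mu_j\ :=\ \lambda^+_{\sigma^{-1}(j)}\ \geq\ 0,\qquad Z\ :=\ \sum_{j=1}^n \mu_j\,v_jv_j^T,
\]
so that $Z$ is PSD and commutes with $X$. The matrix $Y:=X+Z$ then has spectral decomposition $Y=\sum_{j}(\lambda_j(X)+\mu_j)v_jv_j^T$, so its multiset of eigenvalues is $\{\lambda_j(X)+\mu_j\}_{j=1}^n$. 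Evaluating at the index $\sigma_i$ gives
\[
  \lambda_{\sigma_i}(X)+\mu_{\sigma_i}\ =\ \lambda_{\sigma_i}(X)+\lambda^+_i\ =\ \lambda_i+\lambda^+_i,
\]
which shows that $\lambda+\lambda^+$ is some permutation $\tau\in S_n$ of the ordered eigenvalue vector $\lambda(Y)$, i.e.\ $\lambda(Y)^\tau=\lambda+\lambda^+$.

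To conclude, I would observe that $Z\in \cS^{n,n}\subseteq \cS^{n,k}$, hence $Y=X+Z\in \cS^{n,k}$ by convexity of the cone $\cS^{n,k}$; therefore $\lambda+\lambda^+=\lambda(Y)^\tau\in \lambda(\cS^{n,k})$. I do not foresee a genuine obstacle here: the whole argument is essentially free once one notices that simultaneous diagonalization is what turns eigenvalue addition into entrywise addition. The only bit of care required is the bookkeeping of $\sigma$ in the definition of the $\mu_j$, which ensures that the eigenvalue $\lambda^+_i$ is added precisely to the $i$-th entry $\lambda_{\sigma_i}(X)$ of $\lambda$.
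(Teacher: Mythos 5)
Your proposal is correct and takes essentially the same approach as the paper: diagonalize $X$ in a fixed orthonormal eigenbasis, build a PSD matrix with eigenvalues $\lambda^+$ in that same basis, and conclude from $\cS^{n,n}\subseteq \cS^{n,k}$ together with the convexity of $\cS^{n,k}$. The paper streamlines the permutation bookkeeping by writing $X=V^T\Lambda V$ with $\Lambda=\mathrm{diag}(\lambda)$ from the outset (so no explicit $\sigma$ is needed), but the substance is identical.
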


We now discuss one application of this result.
It is shown in \cite{BDSS2020} that every point on the boundary of $H(e^4_2)$ with exactly one negative entry is an eigenvalue vector of some matrix in $\cS^{4,2}$.
We observe that the same holds for all points in $H(e^4_2)$ with this property.

\begin{corollary}\label{cor:(4,2)}
Any $\lambda \in H(e^4_2)$ with at most $1$ negative entry is an eigenvalue vector of some matrix in $\cS^{4,2}$.
\end{corollary}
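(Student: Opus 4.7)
The plan is to reduce to the boundary case treated in \cite{BDSS2020} by subtracting a nonnegative vector from $\lambda$ and then invoking Proposition~\ref{prop:star}. If $\lambda\in H(e^4_2)$ has no negative entry, then $\lambda\in \R^4_{\geq 0}=\lambda(\cS^{4,4})\subseteq \lambda(\cS^{4,2})$ and the claim is immediate. Otherwise, after permuting coordinates I may assume $\lambda_1,\lambda_2,\lambda_3\geq 0>\lambda_4$, and the goal is to find $\mu\in\lambda(\cS^{4,2})$ with $\lambda-\mu\in \R^4_{\geq 0}$; Proposition~\ref{prop:star} will then yield $\lambda\in\lambda(\cS^{4,2})$. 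The natural candidate for $\mu$ is a point on $\partial H(e^4_2)$ with exactly one negative entry, since \cite{BDSS2020} guarantees that any such point lies in $\lambda(\cS^{4,2})$.

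To produce such a $\mu$, I would try the simplest ansatz: keep $\mu_4=\lambda_4$ and scale the three nonnegative coordinates by a common factor, i.e., $\mu=(t\lambda_1,t\lambda_2,t\lambda_3,\lambda_4)$ for some $t\in(0,1]$. The expansion $e^4_2(\lambda+s(1,\dots,1))=6s^2+3e^4_1(\lambda)s+e^4_2(\lambda)$ identifies $H(e^4_2)$ with $\{e^4_1\geq 0,\,e^4_2\geq 0\}$ and $\partial H(e^4_2)$ with the slice $\{e^4_2=0\}\cap H(e^4_2)$. Imposing $e^4_2(\mu)=0$ and factoring out the trivial root $t=0$ leaves the unique positive solution
\[ t=\frac{|\lambda_4|\,(\lambda_1+\lambda_2+\lambda_3)}{e^3_2(\lambda_1,\lambda_2,\lambda_3)}, \]
the denominator being nonzero because $\lambda\in H(e^4_2)$ together with $\lambda_4<0$ forces at least two of $\lambda_1,\lambda_2,\lambda_3$ to be strictly positive.

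Three short verifications close the argument. First, $t\leq 1$ is equivalent to $e^4_2(\lambda)\geq 0$, so $\lambda-\mu=((1-t)\lambda_1,(1-t)\lambda_2,(1-t)\lambda_3,0)\in \R^4_{\geq 0}$. Second, $\mu$ has exactly one negative entry since $\mu_4=\lambda_4<0$ and $\mu_1,\mu_2,\mu_3\geq 0$. Third, $e^4_1(\mu)\geq 0$ rearranges to $(\lambda_1+\lambda_2+\lambda_3)^2\geq e^3_2(\lambda_1,\lambda_2,\lambda_3)$, which is immediate from $(\lambda_1+\lambda_2+\lambda_3)^2=\lambda_1^2+\lambda_2^2+\lambda_3^2+2\,e^3_2(\lambda_1,\lambda_2,\lambda_3)$. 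The only step that calls for insight is choosing the ansatz; once the uniform-scaling $\mu$ is tried, every required inequality falls out of the defining inequalities $e^4_1,e^4_2\geq 0$ of $H(e^4_2)$.
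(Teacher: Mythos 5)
Your proof is correct and takes essentially the same approach as the paper: both reduce to a point $\mu$ on $\partial H(e^4_2)$ with exactly one negative entry, invoke \cite[Thm.~8.1]{BDSS2020} to conclude $\mu\in\lambda(\cS^{4,2})$, and then apply Proposition~\ref{prop:star} to add back $\lambda-\mu\in\R^4_{\geq 0}$. The only difference is the choice of boundary point: the paper lowers the single negative entry, taking $\mu=\lambda-t(0,0,0,1)$ with $t=e^4_2(\lambda)/(\lambda_1+\lambda_2+\lambda_3)$, so that $\lambda-\mu$ is trivially nonnegative, whereas you scale down the three nonnegative entries, which requires the extra (correct and correctly verified) check $t\leq 1$; both land on $\{e^4_2=0\}\cap H(e^4_2)$ via the same underlying inequality $(\lambda_1+\lambda_2+\lambda_3)^2\geq e^3_2(\lambda_1,\lambda_2,\lambda_3)$.
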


In the rather trivial case $k=1$ the set $\lambda(\cS^{n,1})=H(e^n_1)$ is a closed half-space.

\begin{theorem}\label{thm:k=1}
  For any $n\geq 1$ we have
  \begin{align*}
    \lambda(\cS^{n,1})\ =\ \left\{\lambda\in \R^n\,:\,e^n_1(\lambda)=\sum_{j=1}^n \lambda_j\geq 0\right\}
  \end{align*}
\end{theorem}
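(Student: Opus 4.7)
\medskip

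\noindent\textbf{Proof proposal.} The set $\cS^{n,1}$ is, by definition, the set of symmetric matrices whose all $1\times 1$ principal submatrices are PSD, i.e.\ whose diagonal entries are nonnegative. The plan is to prove the two inclusions separately, the first by a trace argument and the second via the Schur--Horn theorem.

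For the inclusion $\lambda(\cS^{n,1})\subseteq\{\lambda:\sum_j\lambda_j\geq 0\}$, I would simply observe that for any $X\in\cS^{n,1}$ one has
\[
\sum_{j=1}^n\lambda_j(X)\ =\ \tr(X)\ =\ \sum_{i=1}^n X_{ii}\ \geq\ 0,
\]
and this quantity is invariant under permutations of the eigenvalues, so $e^n_1(\lambda(X)^\sigma)\geq 0$ for every $\sigma\in S_n$.

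For the reverse inclusion, given $\lambda=(\lambda_1,\dots,\lambda_n)\in\R^n$ with $\sum_j\lambda_j\geq 0$, I would exhibit an explicit $X\in\cS^{n,1}$ having $\lambda$ as its vector of eigenvalues (in the prescribed order, which takes care of the permutation freedom via conjugation by a permutation matrix). Let $\bar\lambda:=\frac{1}{n}\sum_j\lambda_j\geq 0$ and set $d:=(\bar\lambda,\dots,\bar\lambda)\in\R^n_{\geq 0}$. The constant vector $d$ is majorized by any reordering of $\lambda$ with the same sum (the constant vector is minimal in majorization order among vectors with fixed total). By the Schur--Horn theorem, there exists a real symmetric matrix $X\in\Sym_n$ with diagonal $d$ and eigenvalues $\lambda_1,\dots,\lambda_n$. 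Since all diagonal entries of $X$ equal $\bar\lambda\geq 0$, we have $X\in\cS^{n,1}$, so $\lambda\in\lambda(\cS^{n,1})$.

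There is essentially no serious obstacle here: the only point worth emphasising is that one can choose the diagonal to be the constant vector, which both makes the majorization condition automatic and secures the nonnegativity required by membership in $\cS^{n,1}$. If one prefers to avoid invoking Schur--Horn, an equivalent self-contained route is to write $X=\bar\lambda\, I+Y$, where $Y$ is any symmetric matrix of trace $0$ with eigenvalues $\lambda_j-\bar\lambda$ and zero diagonal; such $Y$ can be built by a direct recursive construction (e.g.\ a sequence of $2\times 2$ Givens rotations mixing two diagonal entries at a time so as to zero them one after another, which is the standard constructive proof of Schur--Horn for the zero-diagonal case).
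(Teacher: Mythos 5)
Your proof is correct, and it takes a noticeably cleaner route than the paper's. For the forward inclusion, you use the trace identity $\sum_j\lambda_j(X)=\tr X=\sum_i X_{ii}\geq 0$, whereas the paper deduces it (in the stronger, ordered form $\sum_{j=1}^i\lambda_j\geq\sum_{j=1}^i x_j\geq 0$ for every $i$) via the Schur--Horn theorem applied to the ordered diagonal. For the reverse inclusion, both you and the paper invoke Schur--Horn, but your choice of target diagonal is the constant vector $(\bar\lambda,\dots,\bar\lambda)$ with $\bar\lambda=\frac{1}{n}\sum_j\lambda_j\geq 0$, which is majorized by $\lambda$ for free (the constant vector with a given sum is minimal in the majorization order), so no computation is needed. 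The paper instead takes the two-valued diagonal $x_1=\dots=x_r=\frac{1}{r}\sum_j\lambda_j$, $x_{r+1}=\dots=x_n=0$ (where $r$ counts the positive entries of $\lambda$) and then verifies the majorization inequalities by a somewhat delicate chain of estimates. What the paper's more elaborate choice buys is a by-product: it simultaneously establishes the explicit polyhedral description of the \emph{ordered} eigenvalue cone $\lambda_{\leq}(\cS^{n,1})=\{\lambda_1\geq\dots\geq\lambda_n:\ \sum_{j=1}^i\lambda_j\geq 0\text{ for all }i\}$, which the author quotes in the open-questions section. Your argument proves exactly the stated theorem, more economically, but does not directly yield that auxiliary description; if you wanted it, you would need to add the Schur--Horn step in the forward direction as the paper does.
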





\section{Some open questions}

By its definition, the set $\lambda(\cS^{n,k})$ consists of permuted eigenvalue vectors of $k$-locally PSD matrices.
It is also natural to consider the subset
\begin{align*}
  \lambda_{\leq}(\cS^{n,k})\ =\ \{\lambda(X)\,:\, X\in \cS^{n,k}\}
\end{align*}
of ordered eigenvalue vectors, that is, the intersection of $\lambda(\cS^{n,k})$ with half-spaces cut out by inequalities $\lambda_i\geq \lambda_{i+1}$, $i=1,\dots, n-1$.  
Our proof of Theorem \ref{thm:nonconvex} does not imply that $\lambda_{\leq}(\cS^{4,2})$ is non-convex, see Remark \ref{rem:(4,2)}.
It would be interesting to understand whether $\lambda_{\leq}(\cS^{n,k})$ is non-convex for some $(n,k)$.
When $(n,k)=(n,n-1)$ the set $\lambda_\leq(\cS^{n,k}))$ is a convex cone by \cite[Thm. $2.2$]{BDSS2020} and when $(n,k)=(n,1)$ it is the convex polyhedral cone
  \begin{align*}
    \lambda_{\leq}(\cS^{n,1})\ =\ \left\{\lambda\in \R^n\,:\,\lambda_1\geq \dots\geq \lambda_n,\ \sum_{j=1}^i\lambda_j\geq 0\ \textrm{for all}\ i=1,\dots,n\right\},
  \end{align*}
  see the proof of Theorem \ref{thm:k=1} below.
 
  Nonzero vectors in the hyperbolicity cone $H(e^n_k)$ can have at most $n-k$ negative entries. Generalizing \cite[Lemma $2$]{BDSS2020} one can show that if a vector $\lambda\in H(e^n_k)$ has $n-k$ negative entries, then the other $k$ entries must be positive. 
  One interesting question is to understand whether for any $(n,k)$ there exist $k$-locally PSD matrices with $n-k$ negative eigenvalues.
  In the cases $(n,n-1)$ and $(n,1)$ \cite[Thm. $2.2$]{BDSS2020} and Theorem \ref{thm:k=1} imply a positive answer to this question. Also, one can see that the real symmetric matrix
  \begin{align*}
    X\ =\
    \begin{pmatrix}
      346 & 84  & -16 & -98\\
      84  & 240 & 77  & 156\\
      -16 & 77  & 30  &  70\\
      -98 & 156 & 70  & 170
    \end{pmatrix}
  \end{align*}
  is in $\cS^{4,2}$ and has two negative eigenvalues.

  See also Remark \ref{rem:sos} for an interesting polynomial optimization problem.
  
\section{Acknowledgements}
We would like to thank Grigoriy Blekherman, whose talk at the POEMA 3rd Workshop inspired this work. We wish to thank Boulos El Hilany for his helpful insight with the SageMath computation.

\section{Proofs}

We need the following elementary fact that we prove first.

\begin{lemma}\label{lemma1}
If $(\lambda_1,\dots, \lambda_{n'})$ is in $\lambda(\cS^{n',k})$, then $(\lambda_1,\dots,\lambda_{n'},\underbrace{0,\dots, 0}_{n-n'})$ is in $\lambda(\cS^{n,k})$. 
\end{lemma}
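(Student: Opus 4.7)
The natural approach is to extend a matrix $X \in \cS^{n',k}$ realizing the eigenvalue vector $(\lambda_1,\dots,\lambda_{n'})$ by padding with a zero block of size $(n-n') \times (n-n')$. Concretely, given $X \in \cS^{n',k}$ and $\sigma \in S_{n'}$ with $(\lambda_1,\dots,\lambda_{n'}) = \lambda(X)^\sigma$, I would set
\[
Y\ =\ \begin{pmatrix} X & 0 \\ 0 & 0_{n-n'} \end{pmatrix}\ \in\ \Sym_n,
\]
and show that $Y \in \cS^{n,k}$ and that $(\lambda_1,\dots,\lambda_{n'},0,\dots,0)$ is a permuted eigenvalue vector of $Y$.

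The spectral claim is immediate: the eigenvalues of $Y$ (with multiplicity) are those of $X$ together with $n-n'$ zeros, so any ordering of these values, including $(\lambda(X)^\sigma, 0, \dots, 0)$, arises as $\lambda(Y)^\tau$ for a suitable $\tau \in S_n$.

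The membership $Y \in \cS^{n,k}$ is where the actual work is. Pick any index set $I \subseteq \{1,\dots,n\}$ with $|I| = k$, and split it as $I_1 = I \cap \{1,\dots,n'\}$ and $I_2 = I \cap \{n'+1,\dots,n\}$. Then the principal submatrix $Y_I$ is (up to a simultaneous row/column permutation) block diagonal with blocks $X_{I_1}$ and the zero matrix of size $|I_2|$, so PSD-ness of $Y_I$ reduces to PSD-ness of $X_{I_1}$. If $|I_1| = k$, this is exactly the hypothesis $X \in \cS^{n',k}$. If $|I_1| < k$, I would extend $I_1$ to a set $I_1' \subseteq \{1,\dots,n'\}$ of size $k$ (possible since we may assume $k \leq n'$, the case $k > n'$ being essentially vacuous because then $\cS^{n',k} = \Sym_{n'}$ and can be handled separately or excluded), observe that $X_{I_1'}$ is PSD by hypothesis, and conclude that its principal submatrix $X_{I_1}$ is PSD as well.

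The only mild subtlety, and what I would treat as the main potential obstacle, is the edge case $k > n'$: there $\cS^{n',k}$ imposes no constraints and one must separately verify (or reduce to the case $k \leq n'$) that the padded matrix still lies in $\cS^{n,k}$; but since for such $Y$ every $k\times k$ principal submatrix contains a full zero row and column coming from the padding, one can typically sidestep this by increasing $n'$ up to $k$ by padding with zeros in stages. Once this case distinction is out of the way, the proof reduces to the two-line block-diagonal verification sketched above.
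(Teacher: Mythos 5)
Your proof is correct and takes essentially the same approach as the paper: pad $X$ with a zero block to form $Y$, observe the eigenvalues of $Y$ are those of $X$ together with zeros, and verify $Y \in \cS^{n,k}$ by noting each $k\times k$ principal submatrix of $Y$ is block diagonal with a principal submatrix of $X$ (of size $\leq k$, hence PSD since $X \in \cS^{n',k}$) and a zero block. Your explicit handling of the case $|I_1| < k$ and the edge case $k > n'$ is slightly more careful than the paper's terse version, but the latter is moot since $\cS^{n',k}$ is only defined for $k \leq n'$.
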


\begin{proof}
  Let $(\lambda_1,\dots,\lambda_{n'})=\lambda(X')$, where $X'\in \cS^{n',k}$.
  Note that the $n\times n$ matrix
  \begin{align*}
    X\ =\
    \begin{pmatrix}
      X' & {\Huge 0} \\
      {\Huge 0} & {\Huge 0} 
    \end{pmatrix}
  \end{align*}
  is $k$-locally PSD, since its principal submatrices $X|_S$, $S\subseteq \{1,\dots, n\}$, $\vert S\vert = k$, read
  \begin{align*}
      X|_S\ =\
      \begin{pmatrix}
        X'|_{S'} & {\Huge 0}\\
        {\Huge 0} & {\Huge 0}
      \end{pmatrix},
  \end{align*}
  where $S'=S\cap \{1,\dots, n'\}$.
  It is easy to see that $(\lambda_1,\dots,\lambda_{n'}, 0,\dots, 0)=\lambda(X)$.
\end{proof}

\begin{proofof}{Theorem \ref{thm:nonconvex}}
  Let us observe that the vector $\lambda=(4,4,-1,-1)$ belongs to the cone $H(e^4_2)$. Indeed, for any $t\geq 0$ the value
  \begin{align*}
    e^4_2(4+t,4+t,-1+t,-1+t)\ &=\ (4+t)^2+(t-1)^2+4(4+t)(t-1)\\
    &=\ 1+18t+6t^2
  \end{align*}
  is positive and hence the vector $\lambda+t(1,1,1,1)$ lies in $H(e^4_2)$.
  Furthermore, $\lambda$ is the mid-point of the segment joining $\lambda'=(4,4,-2,0)$ and $\lambda''=(4,4,0,-2)$. Note that $\lambda'$ is obtained by appending a zero to the vector $(4,4,-2)\in H(e^3_2)=\lambda(\cS^{3,2})$, where equality of sets is due to \cite[Thm. $2.2$]{BDSS2020}. Thus, Lemma \ref{lemma1} implies that $\lambda'$ (and hence also its permuted copy $\lambda''$) belongs to $\lambda(\cS^{4,2})$. 
  We show that $\lambda$ is not in $\lambda(\cS^{4,2})$.
  Note that $\lambda = \lambda(X)$ for some matrix $X\in \Sym_4$ if and only if we can write
  \begin{equation}\label{eq:rep}
  \begin{aligned}
    X\ =\ V^T \Lambda V\ &=\
    \begin{pmatrix}
      \Vector{v}^T_1 \,|\, \Vector{v}^T_2\, |\,  \Vector{v}^T_3\, |\, \Vector{v}^T_4 
    \end{pmatrix}
    \begin{pmatrix}
      4 &   &   & {\huge 0} \\
      & 4 &   & \\
      &   & -1 & \\
     {\huge 0} &   &    & -1
    \end{pmatrix}
                 \begin{pmatrix}
                   \Vector{v}_1 \\ \Vector{v}_2 \\ \Vector{v}_3\\ \Vector{v}_4
                 \end{pmatrix}\\
    &=\
      \begin{pmatrix}
  \substack{4 v_{11}^2 + 4 v_{21}^2\\ - v_{31}^2 - v_{41}^2} & 
  \substack{4 v_{11} v_{12} + 4 v_{21} v_{22}\\ - v_{31} v_{32} - v_{41} v_{42}} & 
  \substack{4 v_{11} v_{13} + 4 v_{21} v_{23}\\ - v_{31} v_{33} - v_{41} v_{43}} & 
  \substack{4 v_{11} v_{14} + 4 v_{21} v_{24}\\ - v_{31} v_{34} - v_{41} v_{44}} \\
  \substack{4 v_{11} v_{12} + 4 v_{21} v_{22}\\ - v_{31} v_{32} - v_{41} v_{42}} &
  \substack{4 v_{12}^2 + 4 v_{22}^2\\ - v_{32}^2 - v_{42}^2} &
  \substack{4 v_{12} v_{13} + 4 v_{22} v_{23}\\ - v_{32} v_{33} - v_{42} v_{43}} &
  \substack{4 v_{12} v_{14} + 4 v_{22} v_{24}\\ - v_{32} v_{34} - v_{42} v_{44}} \\
  \substack{4 v_{11} v_{13} + 4 v_{21} v_{23}\\ - v_{31} v_{33} - v_{41} v_{43}} &
  \substack{4 v_{12} v_{13} + 4 v_{22} v_{23}\\ - v_{32} v_{33} - v_{42} v_{43}} &
  \substack{4 v_{13}^2 + 4 v_{23}^2\\ - v_{33}^2 - v_{43}^2} &
  \substack{4 v_{13} v_{14} + 4 v_{23} v_{24}\\ - v_{33} v_{34} - v_{43} v_{44}}\\
  \substack{4 v_{11} v_{14} + 4 v_{21} v_{24}\\ - v_{31} v_{34} - v_{41} v_{44}} &
  \substack{4 v_{12} v_{14} + 4 v_{22} v_{24}\\ - v_{32} v_{34} - v_{42} v_{44}} &
  \substack{4 v_{13} v_{14} + 4 v_{23} v_{24}\\ - v_{33} v_{34} - v_{43} v_{44}} &
  \substack{4 v_{14}^2 + 4 v_{24}^2\\ - v_{34}^2 - v_{44}^2}
      \end{pmatrix}
  \end{aligned},
\end{equation}
where $V\in O(4)$ is an orthogonal matrix, whose rows are eigenvectors $\Vector{v}_1,\Vector{v}_2, \Vector{v}_3, \Vector{v}_4\in \R^4$ of $X$.
The representation \eqref{eq:rep} is unique up to the choice of an orthonormal basis $\{\Vector{v}_1, \Vector{v}_2\}$ (respectively, $\{\Vector{v}_3, \Vector{v}_4\}$) of the eigenspace of $X$ corresponding to the eigenvalue $4$ (respectively, $-1$).
In particular, performing orthogonal changes of bases in $L=\mathrm{\normalfont{Span}}\{\Vector{v}_1,\Vector{v}_2\}$ and in $L^\perp=\mathrm{\normalfont{Span}}\{\Vector{v}_3,\Vector{v}_4\}$  we can put $v_{12}=v_{34}=0$.
  Now, the matrix \eqref{eq:rep} with $v_{12}=v_{34}=0$ is $2$-locally PSD if and only if the polynomial system 

  {\footnotesize\begin{equation*}
  \begin{aligned}
4 v_{11}^2 + 4 v_{21}^2 - v_{31}^2 - v_{41}^2-m_1\ =\ 0,\\
 4 v_{22}^2 - v_{32}^2 - v_{42}^2-m_2\ =\ 0,\\
4 v_{13}^2 + 4 v_{23}^2 - v_{33}^2 - v_{43}^2-m_3\ =\ 0,\\
4 v_{14}^2 + 4 v_{24}^2 - v_{44}^2-m_4\ =\ 0,\\ 
(4 v_{11}^2 + 4 v_{21}^2 - v_{31}^2 - v_{41}^2) (4 v_{22}^2 - v_{32}^2 - v_{42}^2)-(4 v_{21} v_{22} - v_{31} v_{32} - v_{41} v_{42})^2-m_{12}\ =\  0,\\
(4 v_{11}^2 + 4 v_{21}^2 - v_{31}^2 - v_{41}^2) (4 v_{13}^2 + 4 v_{23}^2 - v_{33}^2 - v_{43}^2)-(4 v_{11} v_{13} + 4 v_{21} v_{23} - v_{31} v_{33} - v_{41} v_{43})^2-m_{13}\ =\ 0,\\
(4 v_{11}^2 + 4 v_{21}^2 - v_{31}^2 -v_{41}^2) (4 v_{14}^2 + 4 v_{24}^2 - v_{44}^2)-(4 v_{11} v_{14} + 4 v_{21} v_{24} - v_{41} v_{44})^2-m_{14}\ =\ 0,\\
(4 v_{22}^2 - v_{32}^2 - v_{42}^2) (4 v_{13}^2 + 4 v_{23}^2 - v_{33}^2 - v_{43}^2)-(4 v_{22} v_{23} - v_{32} v_{33} - v_{42} v_{43})^2-m_{23}\ =\ 0,\\
(4 v_{22}^2 - v_{32}^2 - v_{42}^2) (4 v_{14}^2 + 4 v_{24}^2 - v_{44}^2)-(4 v_{22} v_{24} - v_{42} v_{44})^2-m_{24}\ =\ 0,\\
(4 v_{13}^2 + 4 v_{23}^2 - v_{33}^2 - v_{43}^2) (4 v_{14}^2 + 4 v_{24}^2 - v_{44}^2)-(4 v_{13} v_{14} + 4 v_{23} v_{24} - v_{43} v_{44})^2-m_{34}\ =\ 0,\\
 v_{11}^2 + v_{21}^2 + v_{31}^2 + v_{41}^2 - 1 = 0,\\
 v_{22}^2 + v_{32}^2 + v_{42}^2 - 1 = 0,\\ 
  v_{13}^2 + v_{23}^2 + v_{33}^2 + v_{43}^2 - 1 = 0,\\
 v_{14}^2 + v_{24}^2 + v_{44}^2 - 1 = 0,\\ 
  v_{21} v_{22} + v_{31} v_{32} + v_{41} v_{42} = 0,\\ 
 v_{11} v_{13} +  v_{21} v_{23} + v_{31} v_{33} + v_{41} v_{43} = 0,\\ 
 v_{11} v_{14} + v_{21} v_{24} + v_{41} v_{44} = 0,\\ 
 v_{22} v_{23} + v_{32} v_{33} + v_{42} v_{43} = 0,\\
 v_{22} v_{24} + v_{42} v_{44} = 0,\\
 v_{13} v_{14} + v_{23} v_{24} + v_{43} v_{44} = 0,
  \end{aligned}
\end{equation*}}

\noindent
has a real solution with nonnegative $m_1, m_2, m_3, m_4, m_{12}, m_{13}, m_{14}, m_{23}, m_{24}, m_{34}$. These variables represent principal minors of $X$ of sizes $1\times 1$ and $2\times 2$, and the last $10$ equations encode orthogonality of the matrix $V$.
Let $I\vartriangleleft\Q[v_{ij}, m_i, m_{ij}]$ be the ideal generated by the $20$ polynomials of the system. Using $\mathrm{\normalfont{SageMath}}$ \cite{sagemath} we find out that the elimination ideal $J=I\cap \Q[m_i,m_{ij}]$ of $I$ with respect to the variables $v_{ij}$ is generated by polynomials
{\footnotesize\begin{equation*}
    \begin{aligned}
L\ =\ &m_{12} + m_{13} + m_{14} + m_{23} + m_{24} + m_{34} - 1,\\
   f_1\ =\ &3m_1 + m_{23} + m_{24} + m_{34} - 5,\\
   f_2\ =\ &3m_2 + m_{13} + m_{14} + m_{34} - 5,\\
   f_3\ =\ &3m_3 - m_{13} - m_{23} - m_{34} - 4,\\
   f_4\ =\ &3m_4 - m_{14} - m_{24} - m_{34} - 4,\\
   F\ =\ &- 2816 - 256m_{13} - 256m_{14} - 256m_{23} - 256m_{24} - 224m_{34} - 50m_{34}^3\\
   &+ 192m_{14}m_{24} + 256m_{13}^2 + 192m_{13}m_{14} + 256m_{14}^2 + 192m_{13}m_{23} + 160m_{14}m_{23} + 256m_{23}^2 \\
&  +160m_{13}m_{24}+192m_{23}m_{24} + 256m_{24}^2 + 96m_{13}m_{34} +
96m_{14}m_{34} + 96m_{23}m_{34} + 96m_{24}m_{34} + 249m_{34}^2\\
&+ 64m_{13}^2m_{14} + 64m_{13}m_{14}^2 + 64m_{13}^2m_{23} + 112m_{13}m_{14}m_{23} + 128m_{14}^2m_{23} + 64m_{13}m_{23}^2\\
&+ 128m_{14}m_{23}^2 +
128m_{13}^2m_{24}+ 112m_{13}m_{14}m_{24} + 64m_{14}^2m_{24} + 112m_{13}m_{23}m_{24} +112m_{14}m_{23}m_{24}\\
&+ 64m_{23}^2m_{24} + 128m_{13}m_{24}^2 + 64m_{14}m_{24}^2 +
64m_{23}m_{24}^2 + 128m_{13}^2m_{34} + 240m_{13}m_{14}m_{34} + 128m_{14}^2m_{34} \\
& +240m_{13}m_{23}m_{34} + 206m_{14}m_{23}m_{34} + 128m_{23}^2m_{34} + 206m_{13}m_{24}m_{34} +
240m_{14}m_{24}m_{34} \\
&+ 240m_{23}m_{24}m_{34} + 128m_{24}^2m_{34} + 78m_{13}m_{34}^2 +78m_{14}m_{34}^2 + 78m_{23}m_{34}^2 + 78m_{24}m_{34}^2\\
& + 16m_{13}^2m_{14}m_{23} + 16m_{13}m_{14}^2m_{23} + 16m_{13}m_{14}m_{23}^2 +
25m_{14}^2m_{23}^2 + 16m_{13}^2m_{14}m_{24} + 16m_{13}m_{14}^2m_{24}\\
&+ 16m_{13}^2m_{23}m_{24}  + 14m_{13}m_{14}m_{23}m_{24} + 16m_{14}^2m_{23}m_{24} + 16m_{13}m_{23}^2m_{24} +
16m_{14}m_{23}^2m_{24} + 25m_{13}^2m_{24}^2\\
&+ 16m_{13}m_{14}m_{24}^2 + 16m_{13}m_{23}m_{24}^2 + 16m_{14}m_{23}m_{24}^2 + 16m_{13}^2m_{14}m_{34} + 16m_{13}m_{14}^2m_{34} \\
&+ 16m_{13}^2m_{23}m_{34} + 82m_{13}m_{14}m_{23}m_{34} + 50m_{14}^2m_{23}m_{34} +16m_{13}m_{23}^2m_{34} + 50m_{14}m_{23}^2m_{34}\\
& + 50m_{13}^2m_{24}m_{34} + 82m_{13}m_{14}m_{24}m_{34} + 16m_{14}^2m_{24}m_{34} + 82m_{13}m_{23}m_{24}m_{34} +82m_{14}m_{23}m_{24}m_{34} \\
&+ 16m_{23}^2m_{24}m_{34} + 50m_{13}m_{24}^2m_{34} +
16m_{14}m_{24}^2m_{34} + 16m_{23}m_{24}^2m_{34} + 25m_{13}^2m_{34}^2 + 66m_{13}m_{14}m_{34}^2\\
& +25m_{14}^2m_{34}^2 + 66m_{13}m_{23}m_{34}^2 + 100m_{14}m_{23}m_{34}^2 + 25m_{23}^2m_{34}^2
+ 100m_{13}m_{24}m_{34}^2 + 66m_{14}m_{24}m_{34}^2\\
&+ 66m_{23}m_{24}m_{34}^2 +25m_{24}^2m_{34}^2 + 50m_{13}m_{34}^3 + 50m_{14}m_{34}^3 + 50m_{23}m_{34}^3 +
50m_{24}m_{34}^3 + 25m_{34}^4. 
  \end{aligned}
\end{equation*}}
Note that $L$ and $F$ depend only on the variables $m_{ij}$.
In particular, if $(m_i, m_{ij})$ is a nonnegative zero of $J$, then $(m_{ij})$ is a nonnegative zero of $L=F=0$. 
Vice versa, if $L(m_{ij})=F(m_{ij})=0$ for some nonnegative $m_{ij}$, setting
\begin{equation*}
  \begin{aligned}
    m_1\ &=\ \frac{1}{3}(m_{12}+m_{13}+m_{14}+4)\geq 0,\\
    m_2\ &=\ \frac{1}{3}(m_{12}+m_{23}+m_{24}+4)\geq 0,\\
    m_3\ &=\ \frac{1}{3}(m_{13}+m_{23}+m_{34}+4)\geq 0,\\
    m_4\ &=\ \frac{1}{3}(m_{14}+m_{24}+m_{34}+4)\geq 0,
  \end{aligned}
\end{equation*}
we recover a nonnegative zero of $J$.

Finally, we prove that $L$ and $F$ have no common nonnegative zeros.
For this it is enough to show that $F$ (which does not depend on $m_{12}$) has no zeros in the unit  simplex
\begin{align*}
\Delta\ =\ \{(m_{13}, m_{14}, m_{23}, m_{24}, m_{34})\in \R^5_{\geq 0}\,:\, m_{13}+m_{14}+m_{23}+m_{24}+m_{34}\leq 1\}.
\end{align*}

Since $0\leq m_{ij}^4\leq m^3_{ij}\leq m_{ij}^2\leq m_{ij}^1\leq 1$ hold for points in $\Delta$, term-wise estimates give

{\footnotesize\begin{equation*}
\begin{aligned}
  F\ =\ &-2816 -256(m_{13}+m_{14}+m_{23}+m_{24}+m_{34})-50m_{34}^3-7m_{34}^2\\
  &+32m_{34} +192(m_{14}m_{24}+m_{13}m_{14}+m_{13}m_{23}+m_{23}m_{24})+256(m_{13}^2+m_{14}^2+m_{23}^2+m_{24}^2+m_{34}^2)\\
  &+160(m_{14}m_{23}+m_{13}m_{24}) + 96(m_{13}m_{34}+m_{14}m_{34}+m_{23}m_{34}+m_{24}m_{34})\\
  &+ 64(m_{13}^2m_{14} + m_{13}m_{14}^2 + m_{13}^2m_{23} + m_{13}m_{23}^2 + m_{14}^2m_{24} + m_{23}^2m_{24}+ m_{14}m_{24}^2 + m_{23}m_{24}^2)\\
  &+ 112(m_{13}m_{14}m_{23}+ m_{13}m_{14}m_{24}+ m_{13}m_{23}m_{24} + m_{14}m_{23}m_{24}) \\
  &+ 128(m_{14}^2m_{23} + m_{14}m_{23}^2 + m_{13}^2m_{24} + m_{13}m_{24}^2 + m_{13}^2m_{34}+ m_{14}^2m_{34}+ m_{23}^2m_{34}+ m_{24}^2m_{34})\\
  &+ 240(m_{13}m_{14}m_{34} + m_{13}m_{23}m_{34}+m_{14}m_{24}m_{34} + m_{23}m_{24}m_{34})\\
  &+ 206 (m_{14}m_{23}m_{34}  + m_{13}m_{24}m_{34})  + 78(m_{13}m_{34}^2 + m_{14}m_{34}^2 + m_{23}m_{34}^2 + m_{24}m_{34}^2)\\
  & + 16(m_{13}^2m_{14}m_{23} + m_{13}m_{14}^2m_{23} + m_{13}m_{14}m_{23}^2 + m_{13}^2m_{14}m_{24}+ m_{13}m_{14}^2m_{24}+ m_{13}^2m_{23}m_{24}\\
  &+ m_{14}^2m_{23}m_{24} + m_{13}m_{23}^2m_{24} + m_{14}m_{23}^2m_{24}+ m_{13}m_{14}m_{24}^2 + m_{13}m_{23}m_{24}^2 + m_{14}m_{23}m_{24}^2 + m_{13}^2m_{14}m_{34}\\
  &+ m_{13}m_{14}^2m_{34} + m_{13}^2m_{23}m_{34} + m_{13}m_{23}^2m_{34}+ m_{14}^2m_{24}m_{34}+ m_{23}^2m_{24}m_{34} +
  m_{14}m_{24}^2m_{34} + m_{23}m_{24}^2m_{34})\\
  &+ 25(m_{14}^2m_{23}^2+ m_{13}^2m_{24}^2+ m_{13}^2m_{34}^2+ m_{14}^2m_{34}^2+ m_{23}^2m_{34}^2 + m_{24}^2m_{34}^2+ m_{34}^4)\\
  &+ 82(m_{13}m_{14}m_{23}m_{34}+ m_{13}m_{14}m_{24}m_{34}+ m_{13}m_{23}m_{24}m_{34} + m_{14}m_{23}m_{24}m_{34})+ 14m_{13}m_{14}m_{23}m_{24}\\
  &+ 50(m_{14}^2m_{23}m_{34}  + m_{14}m_{23}^2m_{34} + m_{13}^2m_{24}m_{34}  + m_{13}m_{24}^2m_{34}+ m_{13}m_{34}^3 + m_{14}m_{34}^3 + m_{23}m_{34}^3 + m_{24}m_{34}^3)\\
  &+ 66(m_{13}m_{14}m_{34}^2 + m_{13}m_{23}m_{34}^2+ m_{14}m_{24}m_{34}^2+ m_{23}m_{24}m_{34}^2) + 100(m_{14}m_{23}m_{34}^2 + m_{13}m_{24}m_{34}^2)\\   
  \leq\ &  -2816-256 m - 50m_{34}^3 -7m_{34}^2 + 32m_{34} + 192(m_{14}+m_{13}+m_{23}+m_{24}) + 256 m \\
  &+ 160 (m_{14}+m_{13})+96(m_{13}+m_{14}+m_{23}+m_{24}) + 64(m_{13}^2+m_{14}^2+m_{23}^2+m_{24}^2)\\
  &+ 112(m_{13}+m_{14}+m_{23}+m_{24})+ 128(m_{14}^2+ m_{23}^2+ m_{13}^2 + m_{24}^2) + 240(m_{14}+m_{13}+m_{24}+m_{23})\\
  &+ 206(m_{23}+m_{24}) + 78(m_{13}+m_{14}+m_{23}+m_{24}) + 16(m_{13}m_{14}m_{23}+m_{13}m_{14}m_{24}\\
  & +m_{13}m_{23}m_{24}+m_{14}m_{23}m_{24}+m_{13}m_{14}m_{34}+m_{13}m_{23}m_{34}+m_{14}m_{24}m_{34}+m_{23}m_{24}m_{34})\\
  & +25(m_{14}^2+m_{13}^2+m_{23}^2+m_{24}^2+m_{34}^2) + 82(m_{13} + m_{14} + m_{23} + m_{24})+14\\
  & + 50(m_{14}^2+m_{23}^2+m_{13}^2+m_{24}^2+m_{34}^2) + 66(m_{14}+m_{13}+m_{24}+m_{23})+100(m_{14}+m_{13})\\
  \leq\ & -2816 + 32 + 192 + 160 + 96 + 64 + 112 + 128 + 240 + 206 + 78 + 16 + 25 + 82 + 14 \\
  &+ 50 + 66 + 100\ =\ -1155\ <\ 0,
\end{aligned}
\end{equation*}}
where $m=m_{13}+m_{14}+m_{23}+m_{24}+m_{34}\leq 1$.
It follows that $F$ takes only negative values on $\Delta$.
The above implies that no matrix $X=V^T\Lambda V$ as in \eqref{eq:rep} can be $2$-locally PSD and hence $\lambda=(4,4,-1,-1)\notin \lambda(\cS^{4,2})$.
\end{proofof}

\begin{remarkx}\label{rem:sos}
Note that the absense of nonnegative roots of $L=F=0$ implies that the minimum $\ell_*$ of the linear function $\ell = m_{13}+m_{14}+m_{23}+m_{24}+m_{34}$ on $\{F=0\}\cap \R^5_{\geq 0}$ is greater than $1$. Numerical solving of a bunch of constrained optimization problems suggests that the minimum is $\ell_*=3.2$ and it is attained at $(m_{13}, m_{14}, m_{23}, m_{24}, m_{34})=(0,0,0,0,3.2)$. It would be interesting to find an algebraic certificate of nonnegativity (see e.g. \cite{Stengle} and \cite{DIW2017}) of say $\ell-2$ on the semialgebraic set $\{F=0\}\cap \R^5_{\geq 0}$. This would give an alternative proof of emptiness of $\{L=F=0\}\cap \R^5_{\geq 0}$ (cf. the proof of Theorem \ref{thm:nonconvex}).
\end{remarkx}

Proposition \ref{prop:star} essentially follows from properties of the PSD cone $\cS^{n,n}$.

\begin{proofof}{Proposition \ref{prop:star}}
  Let $X=V^T\Lambda V\in \cS^{n,k}$ be a spectral decomposition of a $k$-locally PSD matrix, where $V$ is an orthogonal matrix and $\Lambda$ is the diagonal matrix given by $\lambda\in \lambda(\cS^{n,k})$.
  Due to the orthogonal invariance of the PSD cone $\cS^{n,n}$, the matrix $X^+=V^T\Lambda^+V\in \cS^{n,n}\subset \cS^{n,k}$ is in particular $k$-locally PSD, where $\Lambda^+$ is the diagonal matrix given by the nonnegative vector $\lambda^+\in \R^n_{\geq 0}\subset \lambda(\cS^{n,k})$.
  By convexity of $\cS^{n,k}$  
  \begin{align*}
    X+ X^+\ =\ V^T(\Lambda+ \Lambda^+)V
  \end{align*}
is in $\cS^{n,k}$ and hence $\lambda+\lambda^+\in \lambda(\cS^{n,k})$.
\end{proofof}

\begin{proofof}{Corollary \ref{cor:(4,2)}}
  Let $\lambda=(\lambda_1,\lambda_2,\lambda_3,\lambda_4)\in H(e^4_2)$ satisfy $\lambda_1\geq \lambda_2\geq \lambda_3\geq \lambda_4$.
  If $\lambda_4\geq 0$, trivially $\lambda\in \R^4_{\geq 0}\subset \lambda(\cS^{4,2})$.
 Let then $\lambda_3\geq 0>\lambda_4$.
 Note that for $t=e^4_2(\lambda)/(\lambda_1+\lambda_2+\lambda_3)\geq 0$ the vector $\lambda-t(0,0,0,1)$ lies on the boundary of $H(e^4_2)$ and
 it has exactly one negative entry $\lambda_4-t$.
 By \cite[Thm. $8.1$]{BDSS2020} the vector $\lambda-t(0,0,0,1)$ is in $\lambda(\cS^{4,2})$.
 Thus, by Proposition \ref{prop:star} the original vector $\lambda=(\lambda-t(0,0,0,1))+t(0,0,0,1)$ is also in $\lambda(\cS^{4,2})$ because $t(0,0,0,1)\in \R^4_{\geq 0}$.
\end{proofof}

Our proof of Theorem \ref{thm:k=1} is based on the following classical result.

\begin{theorem}[Schur-Horn theorem]\label{Thm:SH}
  Let $x, \lambda\in \R^n$ be two vectors with $x_1\geq\dots\geq x_n$ and $\lambda_1\geq\dots\geq \lambda_n$.
  There exists a symmetric matrix $X\in \Sym_n$ with diagonal entries $x_1,\dots, x_n$ and with eigenvalues $\lambda_1,\dots,\lambda_n$ if and only if for all $i=1,\dots, n-1$
  \begin{align*}
    \sum_{j=1}^i x_j\ \leq\ \sum_{j=1}^i \lambda_j\quad\textrm{\normalfont{and}}\quad     \sum_{j=1}^n x_j\ =\ \sum_{j=1}^n \lambda_j.
  \end{align*}
\end{theorem}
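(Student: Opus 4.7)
The plan is to prove the two implications of the biconditional separately.

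For the necessity direction, write $X = V^T \Lambda V$ with $V \in O(n)$ orthogonal and $\Lambda = \mathrm{diag}(\lambda_1,\ldots,\lambda_n)$. The diagonal entries read $x_i = \sum_j V_{ji}^2 \lambda_j$, so $x = D\lambda$ with $D = (V_{ji}^2)_{i,j}$. Since the rows and columns of $V$ are unit vectors, $D$ has nonnegative entries and row/column sums equal to $1$, i.e.\ $D$ is doubly stochastic. By the Birkhoff--von Neumann theorem, $D = \sum_\sigma c_\sigma P_\sigma$ is a convex combination of permutation matrices, so $x$ is a convex combination of permuted copies $(\lambda_{\sigma_1},\ldots,\lambda_{\sigma_n})$ of $\lambda$. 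For each such permutation, the sorted hypothesis on $\lambda$ gives $\sum_{j=1}^i \lambda_{\sigma_j} \leq \sum_{j=1}^i \lambda_j$ for $i \leq n-1$ with equality at $i = n$; these linear (in)equalities are preserved under convex combinations, yielding the required majorization of $x$ by $\lambda$.

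For the sufficiency direction, I would proceed by induction on $n$; the base case $n=1$ is trivial. For the inductive step, the majorization hypothesis gives $x_1 \leq \lambda_1$ and, by subtracting the $i = n$ equality from the $i = n-1$ partial-sum inequality, $x_n \geq \lambda_n$; hence $\lambda_n \leq x_1 \leq \lambda_1$ and one may pick $k \in \{2,\ldots,n\}$ with $\lambda_k \leq x_1 \leq \lambda_{k-1}$. A $2 \times 2$ Givens rotation in the $(1,k)$-plane applied to $\Lambda$, with angle chosen so that the $(1,1)$-entry becomes $x_1$, produces a matrix $\widetilde{X} \in \Sym_n$ with spectrum $\lambda$, whose $(n-1) \times (n-1)$ principal submatrix (obtained by deleting the first row and column) is diagonal with entries $(\lambda_2, \ldots, \lambda_{k-1}, \lambda_1 + \lambda_k - x_1, \lambda_{k+1}, \ldots, \lambda_n)$. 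Applying the inductive hypothesis to this submatrix and to $(x_2,\ldots,x_n)$ yields an orthogonal $U \in O(n-1)$ conjugating the submatrix to one with diagonal $(x_2,\ldots,x_n)$, and $(1 \oplus U)^T \widetilde{X}(1 \oplus U)$ is the desired matrix $X$.

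The main obstacle is verifying that the reduced problem still satisfies the majorization hypothesis so the induction actually applies. After sorting the new spectrum decreasingly to obtain $\lambda'$, the total-sum equality $\sum_j \lambda'_j = (\sum_j \lambda_j) - x_1 = \sum_{j \geq 2} x_j$ follows immediately from the hypothesis. The partial-sum inequalities $\sum_{j=1}^i \lambda'_j \geq \sum_{j=2}^{i+1} x_j$ for $i \leq n-2$ require a case analysis according to where the modified entry $\lambda_1 + \lambda_k - x_1 \in [\lambda_k, \lambda_1]$ falls in the sorted order; in each case they reduce to the original partial-sum inequalities combined with $x_1 \leq \lambda_1$. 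A judicious choice of $k$ (for instance the smallest index with $\lambda_k \leq x_1$) streamlines this bookkeeping, but no deep new idea is required.
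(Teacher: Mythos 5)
The paper does not prove this statement: Schur--Horn is cited as ``the following classical result'' and used as a black box in the proof of Theorem~\ref{thm:k=1}, with no proof supplied. So there is nothing in the source to compare against; the relevant question is whether your argument is sound.

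Your proposal is a correct sketch of the standard proof. The necessity half (Schur's direction) is exactly the classical argument: $x = D\lambda$ with $D_{ij}=V_{ji}^2$ doubly stochastic by orthogonality, Birkhoff--von Neumann writes $D$ as a convex combination of permutation matrices, and the partial-sum inequalities for each permuted $\lambda$ pass to the convex combination. The sufficiency half (Horn's direction) via induction with a Givens rotation in the $(1,k)$-plane is also correct, but the ``main obstacle'' you flag deserves a sharper explanation than ``combine the original partial-sum inequalities with $x_1\leq\lambda_1$'', which is not quite what closes the cases. What actually makes the bookkeeping work, once $k$ is minimal with $\lambda_k\leq x_1$, is the term-by-term comparison $\lambda_j\geq\lambda_{k-1}\geq x_1\geq x_j$ for $2\leq j\leq k-1$: for $i\leq k-2$ this alone gives $\sum_{j=1}^i\lambda'_j\geq\sum_{j=2}^{i+1}x_j$ whether or not $\mu=\lambda_1+\lambda_k-x_1$ lands among the top $i$ entries (in the latter case also using $\mu>\lambda_{i+1}\geq x_1\geq x_{i+1}$), while for $i\geq k-1$ one gets $\sum_{j=1}^i\lambda'_j=\sum_{j=1}^{i+1}\lambda_j-x_1\geq\sum_{j=2}^{i+1}x_j$ from the original majorization. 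With that correction the induction is airtight. (An alternative that avoids the case split entirely is to rotate in the adjacent $(k-1,k)$-plane rather than $(1,k)$, so the modified eigenvalue $\lambda_{k-1}+\lambda_k-x_1$ stays sandwiched in $[\lambda_k,\lambda_{k-1}]$ and the resulting $\lambda'$ is already sorted.) Since the paper itself just cites the theorem, your blind reconstruction is a genuine proof where the paper offers none, which is worth noting even if Schur--Horn is indeed textbook material.
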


\begin{proofof}{Theorem \ref{thm:k=1}}
Note that the statement is equivalent to 
    \begin{align}\label{eq:description}
    \lambda(\cS^{n,1})\ =\ \left\{\lambda^\sigma\in \R^n\,:\, \sigma\in S_n,\ \lambda_1\geq \dots\geq \lambda_n,\ \sum_{j=1}^i \lambda_j\geq 0\ \textrm{\normalfont{for all}}\ i=1,\dots, n\right\}.
  \end{align}
  Indeed, if $\sum_{j=1}^i \lambda_j<0$ for some $i<n$, then $\lambda_i\geq \lambda_{i+1}\geq \dots\geq \lambda_n$ must be negative. Therefore, $\sum_{j=1}^n\lambda_j\leq \sum_{j=1}^i\lambda_j<0$ and hence no permuted vector $\lambda^\sigma$, $\sigma\in S_n$, can satisfy $\sum_{j=1}^n\lambda_j^\sigma =\sum_{j=1}^n\lambda_j\geq 0$.
  The opposite direction is trivial. Thus, we need to prove that ordered vectors in $\lambda(\cS^{n,1})$ are exactly those that satisfy inequalities in \eqref{eq:description}.

  Let $\lambda=\lambda(X)$, where the matrix $X\in \cS^{n,1}$ has ordered diagonal entries $x_1\geq \dots\geq x_n$.
  By Theorem \ref{Thm:SH} we have that $\sum^i_{j=1} \lambda_j\geq \sum^i_{j=1} x_j\geq 0$ holds for $i=1,\dots, n$.

  Let now $\lambda\in \R^n$ be an ordered nonzero vector that satisfies inequalities from \eqref{eq:description}.
  There exists an index $r\in \{1,\dots, n\}$ so that $\lambda_1\geq \dots\geq \lambda_r>0\geq \lambda_{r+1}\geq \dots\geq \lambda_n$.
  Let us consider a vector $x\in \R^n$ defined by $x_1=\dots=x_r=\frac{1}{r}\sum^n_{j=1}\lambda_j\geq 0$ and $x_{r+1}=\dots=x_n=0$.
For $i=1,\dots, r-1$ we have
  \begin{align*}
    \sum_{j=1}^i x_j\ &=\ \frac{i}{r}\sum_{j=1}^n\lambda_j\ \leq\ \frac{i}{r}\sum_{j=1}^r\lambda_j\ =\ \sum_{j=1}^i \lambda_j+\frac{i-r}{r}\sum_{j=1}^i \lambda_j+\frac{i}{r}\sum_{j=i+1}^r \lambda_j\\
    &\leq\ \sum_{j=1}^i \lambda_j+\frac{i(i-r)}{r} \lambda_i +\frac{i}{r}\sum_{j=i+1}^r\lambda_j\ \leq\ \sum_{j=1}^i\lambda_j + \frac{i}{r}\sum_{j=i+1}^r (\lambda_j-\lambda_i)\ \leq \sum_{j=1}^i\lambda_j,
  \end{align*}
  where ordering $\lambda_1\geq \dots\geq \lambda_r>0\geq \lambda_{r+1}\geq \dots\geq \lambda_n$ is used in several places. For $i=r,\dots, n$ we have
  \begin{align*}
    \sum_{j=1}^i x_j\ =\ \sum^n_{j=1} \lambda_j\ \leq\ \sum_{j=1}^i \lambda_j,
  \end{align*}
  where we use negativity of $\lambda_{r+1},\dots, \lambda_n<0$.
  These inequalities, the equality $\sum_{j=1}^n x_j=\sum_{j=1}^n\lambda_j$ and Theorem \ref{Thm:SH} imply that $\lambda_1,\dots,\lambda_n$ are eigenvalues of some matrix $X\in \Sym_n$ with diagonal entries $x_1,\dots, x_n\geq 0$, that is, the vector $\lambda=\lambda(X)$ is in $\lambda(\cS^{n,1})$.
\end{proofof}

\bibliographystyle{siam}


\end{document}